\theoremstyle{plain}
\theoremstyle{plain}
\newtheorem{theorem}{Theorem} [section]
\newtheorem{corollary}[theorem]{Corollary}
\newtheorem{lemma}[theorem]{Lemma}
\newtheorem{proposition}[theorem]{Proposition}
\theoremstyle{definition}
\theoremstyle{remark}
\numberwithin{theorem}{section}
\numberwithin{equation}{section}
\numberwithin{figure}{section}
\def\mean#1{\mathchoice
         {\mathop{\kern 0.2em\vrule width 0.6em height 0.69678ex depth -0.58065ex
                 \kern -0.8em \intop}\nolimits_{\kern -0.4em#1}}%
         {\mathop{\kern 0.1em\vrule width 0.5em height 0.69678ex depth -0.60387ex
                 \kern -0.6em \intop}\nolimits_{#1}}%
         {\mathop{\kern 0.1em\vrule width 0.5em height 0.69678ex
             depth -0.60387ex
                 \kern -0.6em \intop}\nolimits_{#1}}%
         {\mathop{\kern 0.1em\vrule width 0.5em height 0.69678ex depth -0.60387ex
                 \kern -0.6em \intop}\nolimits_{#1}}}
\def\N{\mathbb N}
\def\R{\mathbb R}
\def\a{\alpha}
\def\e{\varepsilon}
\def\l{\lambda}
\def\L{\Lambda}
\def\O{\Omega}
\def\U{\mathcal U}
\DeclareMathOperator{\osc}{osc}
 \DeclareMathOperator{\dist}{dist}
\title[$W^{2,1}$ regularity for solutions
of the Monge-Amp\`ere equation]{$W^{2,1}$ regularity for solutions\\
of the Monge-Amp\`ere equation}
\author[G. De Philippis]{Guido De Philippis}
\address{Scuola Normale Superiore,
p.za dei Cavalieri 7, I-56126 Pisa, Italy}
\email{guido.dephilippis@sns.it}
\author[A. Figalli]{Alessio Figalli}
\address{Department of Mathematics,
The University of Texas at Austin, 1 University Station C1200,
Austin TX 78712, USA}
\email{figalli@math.utexas.edu}
\keywords{Monge-Amp\`ere equation, a-priori estimates, higher integrability, Sobolev regularity}
\begin{document}

\begin{abstract}
In this paper we prove that a strictly convex Alexandrov solution $u$
of the Monge-Amp\`ere equation,
with right hand side bounded away from zero and infinity, is $W^{2,1}_{\rm loc}$.
This is obtained by showing higher integrability a-priori estimates for $D^2u$,
namely $D^2 u \in L \log^k L$ for any $k \in \N$.
\end{abstract}

\maketitle

\section{Introduction}

Let $\Omega \subset \R^n$ be a bounded convex domain,
and $u:\overline\Omega \to \R$ a continuous convex function solving the
Monge-Amp\`ere equation
\begin{equation}
\label{MA}
\begin{cases}
\det D^2 u=f \quad &\text{in $\Omega$}\\
u=0 &\text{on $\partial \Omega$}
\end{cases}
\end{equation}
in the Alexandrov sense.
Whenever $f:\Omega \to \R^+$ is positive and smooth, solutions to such equation
are smooth as well \cite{U}.
However, for several applications it is important
to understand the regularity of $u$ when $f$ does not enjoy any regularity.
More precisely, we want to investigate the properties of $u$
under the only assumption that there exist positive constants $\l,\L>0$
such that $0<\l \leq f \leq \L$ inside $\Omega$.\\

Under such assumptions on $f$, Caffarelli proved that
solutions are strictly convex and $C^{1,\alpha}$ \cite{CA1,CA2}.
In particular, in his works \cite{CA1,CA3}
he could deduce the two following corollaries:
\begin{enumerate}
\item[$\bullet$] \textit{Minkowski Problem \cite{CA1,CA2}:} Let $\Gamma\subset \R^{n+1}$ be a bounded convex set,
and assume that its Gauss curvature $\mathcal G$ satisfies 
$0 < \l \leq \mathcal G \leq \L$. Then $\Gamma$ is strictly convex,
and $\partial\Gamma$ is $C^{1,\alpha}$. 
\item[$\bullet$] \textit{Optimal Transport \cite{CA3}:} Let 
$\O_1,\O_2\subset \R^n$ be two bounded open sets, and $f_1,f_2$ two probability densities
such that $0<\l \leq f_1,f_2\leq
\Lambda$ inside $\O_1$ and $\O_2$ respectively.
Let $u:\R^n \to \R$ be a convex function such that $(\nabla u)_\# f_1=f_2$,
and assume that $\O_2$ is convex.
Then $u \in C^{1,\alpha}_{\rm loc}(\O_1)$.
(This corresponds to say that optimal transport maps
with convex targets are H\"older continuous, see \cite{brenier1,brenier2,CA3}.) 
\end{enumerate}

In this paper we want to investigate the Sobolev regularity of $u$.
In \cite{CAW2p} Caffarelli showed that for any $p>1$ there exists $\e=\e(p)>0$ such that
if $|f-1| \leq \e$, then $u \in W^{2,p}_{\rm loc}(\Omega)$.
Few years later \cite{wang}, Wang constructed examples of solutions to \eqref{MA}, with
$0<\lambda \le f \le \Lambda$ and $\L/\l$ large, which are not $W^{2,p}$ for some $p>1$.
Moreover, by taking $\L/\l$ large enough, $p$ can be chosen as close to $1$ as desired.

These results left open the question of whether solutions to \eqref{MA} with $0<\l \leq f \leq \L$
belong to $W^{2,1}_{\rm loc}(\Omega)$. (This question is also raised as an open problem in
\cite[Section 7.6]{ambrosioCetraro} in connection with the semigeostrophic equations.)

Let us observe that, since $u$ is convex, its Hessian exists in the sense of distributions
and defines a (locally finite) non-negative measure. 
Moreover, the $C^{1,\alpha}$ regularity result of Caffarelli already
shows that $\nabla u$ is a $BV$ map whose distributional derivative has no jump part.
However, to prove that $u \in W^{2,1}_{\rm loc}(\Omega)$ one still needs to rule out the Cantor part.

In these last years, several attempts have been made both to prove such a result
and to construct a counterexample.
In particular, in a recent paper \cite{ADK} the authors connect the Sobolev regularity of $u$
to a differential inclusion in the space of symmetric matrices.\\

In this paper we finally give a positive answer to this problem
by directly working at the level of
the Monge-Amp\`ere equation.
Indeed, we prove not only that $u \in W^{2,1}_{\rm loc}(\Omega)$, but we can actually
show a higher integrability estimate for $D^2 u$. 
Here is our result:
\begin{theorem}\label{reg}
Let $\Omega\subset \R^n$ be a bounded convex domain, and  $u:\overline\Omega \to \R$ be an Alexandrov solution of
\eqref{MA}
with  $0<\lambda \le f \le \Lambda$. Then, for any $\Omega'\subset\subset \Omega$ and $k \in \N\cup \{0\}$,
there exists a constant $C=C(k,n,\l,\L,\Omega,\Omega')>0$ such that
\begin{equation}\label{ulogu}
\int_{\Omega'}\|D^2 u\| \log^k\big(2+ \|D^2 u\|\big) \leq C.
\end{equation}
In particular $u \in W^{2,1}_{\rm loc}(\Omega)$.
\end{theorem}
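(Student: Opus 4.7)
The plan is to reduce \eqref{ulogu} to a universal $L\log^k L$ estimate on \emph{normalized Monge--Amp\`ere sections} of $u$, and then recover local integrability by covering $\Omega'$ with such sections. By Caffarelli's strict convexity and $C^{1,\alpha}$ theorem recalled in the introduction, for every $x_0\in\Omega'$ and every $h$ below a threshold $h_0=h_0(\Omega',\Omega,\lambda,\Lambda)$, the section
\[
S_h(x_0)=\bigl\{y\in\Omega : u(y)<u(x_0)+p(x_0)\cdot(y-x_0)+h\bigr\},
\]
with $p(x_0)$ a subgradient of $u$ at $x_0$, is compactly contained in $\Omega$. By John's lemma there is an affine map $Ty=Ay+b$ and a universal $c_n>0$ with $B_{c_n}\subset\widetilde S:=T(S_h(x_0))\subset B_{1/c_n}$; the renormalized function
\[
\tilde u(y):=\tfrac{1}{h}\bigl(u(T^{-1}y)-u(x_0)-p(x_0)\cdot(T^{-1}y-x_0)-h\bigr)
\]
is convex on $\widetilde S$, vanishes on $\partial\widetilde S$, and satisfies $\lambda\le\det D^2\tilde u\le\Lambda$. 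Using the classical normalization $h|\det A|^{2/n}\simeq 1$ together with $D^2 u(x)=h A^{T} D^2\tilde u(Tx)A$, a direct change of variables yields an estimate of the shape
\[
\int_{S_h(x_0)}\|D^2 u\|\log^k(2+\|D^2 u\|)\,dx\le C(n,\lambda,\Lambda)\,|S_h(x_0)|\,\bigl(\mathcal I_k(\tilde u)+1\bigr),
\]
where $\mathcal I_k(\tilde u):=\int_{\widetilde S\cap B_{1/(2c_n)}}\|D^2\tilde u\|\log^k(2+\|D^2\tilde u\|)$. A Vitali/Besicovitch covering of $\Omega'$ by such sections with bounded overlap then reduces the theorem to the universal bound $\mathcal I_k(\tilde u)\le C(k,n,\lambda,\Lambda)$.

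The crux is the base case $k=0$, i.e.\ the universal $L^1$ estimate $\mathcal I_0(\tilde u)\le C(n,\lambda,\Lambda)$. Wang's counterexamples forbid any $W^{2,p}$ bound with $p>1$ uniform in $\lambda,\Lambda$, so one must operate exactly at the $L^1$ scale, turning Caffarelli's perturbative $W^{2,p}$ theorem (available when $f$ is $\e(p)$-close to a constant) into an honest nonperturbative statement. I would run a stopping-time / Vitali decomposition inside $\widetilde S\cap B_{1/(2c_n)}$: at every point follow the nested sub-sections at that point and stop at the first scale on which, after renormalization, $\tilde f$ has oscillation below the perturbative threshold $\e$. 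On each such stopping section Caffarelli's theorem gives a quantitative $L^{1+\e}$ bound on $\|D^2\tilde u\|$, and a Vitali-type summation over the resulting family delivers an $L^1$ bound with the correct scaling. The exceptional set on which no stopping scale exists must be shown to have arbitrarily small measure; this should follow from iterating the decomposition and exploiting the engulfing / volume-decay property of Monge--Amp\`ere sections.

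Once $\mathcal I_0\le C$ is in hand, the induction $k\mapsto k+1$ runs on the same mechanism. On each stopping section the perturbative theorem gives the distribution-function bound $|\{\|D^2\tilde u\|>t\}|\le C t^{-(1+\e)}|\text{section}|$, and multiplying by $\log^k t$ inside the layer-cake formula absorbs one extra logarithmic factor per step while keeping the integrals convergent. Summing the contributions over the tree of stopping sub-sections, the $L\log^{k+1} L$ norm on $\widetilde S\cap B_{1/(2c_n)}$ is controlled by $C_k+C\,\mathcal I_k(\tilde u)$, and the inductive hypothesis closes the estimate.

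The main obstacle is, unambiguously, the base case of the second step. The $C^{1,\alpha}$ regularity recalled in the introduction places $D^2 u$ only in the space of Radon measures, with no quantitative $L^1$ gain, and the assumption $\lambda\le f\le\Lambda$ is intrinsically nonperturbative; the decisive ingredient must therefore be a quantitative balance between the scales on which $\tilde f$ is close to a constant (and Caffarelli's perturbative theorem applies) and the complementary scales, a balance that has to survive the non-Euclidean rescalings dictated by Monge--Amp\`ere geometry. Arranging such a balance and packaging it into a workable stopping-time statement is where the genuinely new analysis must live; Steps~1 and~3 are essentially bookkeeping around it.
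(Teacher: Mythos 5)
There is a genuine gap, and it sits exactly where you locate the ``crux.'' First, the base case $k=0$ is not the hard part: for a convex function $\|D^2 u\|\le \Delta u$, so $\int_{S}\|D^2u\|\le\int_S\Delta u$ is controlled by the boundary integral of $\partial_\nu u$, and the universal interior gradient bound for normalized solutions (Alexandrov estimate plus convexity) gives $\mathcal I_0(\tilde u)\le C(n,\lambda,\Lambda)$ in one line. This is how the paper handles $k=0$. Note also that a uniform $L^1$ bound on $\|D^2u_j\|$ for smooth approximations does \emph{not} yield $u\in W^{2,1}$ --- it only says $D^2u$ is a measure of bounded mass, which was already known; one needs the higher integrability ($L\log^k L$, $k\ge 1$) precisely to get equi-integrability and rule out the singular part. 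So the real content is the step $k\mapsto k+1$.

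Second, the mechanism you propose for that step fails. Under renormalization of a section the new right-hand side is just $f$ precomposed with an affine map (indeed $\det D^2 v(z)=f(T^{-1}z)$ for $v$ as in \eqref{vnor}), so its essential oscillation over the normalized section equals the essential oscillation of $f$ over the original section. For a generic $f$ with $\Lambda/\lambda$ above Caffarelli's perturbative threshold (e.g.\ $f=\lambda+(\Lambda-\lambda)\chi_E$ with $E$ and $E^c$ of positive density at every point), \emph{no} stopping scale is ever reached at any point: your exceptional set has full measure, not small measure, and the stopping-time decomposition collapses. The paper's argument is genuinely non-perturbative and different: (i) by integrating $\Delta v$ by parts, the average of $\|D^2u\|$ over a section is bounded above by $\|T\|\|T^*\|/(\det T)^{2/n}$; (ii) by touching the normalized solution from below with a paraboloid and using the convex envelope/contact-set (Alexandrov) argument, one finds a universal fraction of the section where $D^2v\ge c\,\mathrm{Id}$, hence where $\|D^2u\|\ge c\,\|T\|\|T^*\|/(\det T)^{2/n}$. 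Together these give a reverse weak-$(1,1)$ inequality $|\{M\ge\gamma\}|\le C|\{\|D^2u\|\ge c\gamma\}|$ for the section-maximal function of $\|D^2u\|$, which combined with the standard maximal inequality yields $\int_{\{\|D^2u\|\ge\gamma\}}\|D^2u\|\le C\gamma|\{\|D^2u\|\ge c\gamma\}|$ and hence the self-improvement $L\log^kL\Rightarrow L\log^{k+1}L$ by layer-cake. Your outer reduction to normalized sections and the covering bookkeeping do match the paper, but the central estimate needs to be replaced by something of this reverse-H\"older type rather than by an appeal to the small-oscillation $W^{2,p}$ theory.
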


As a corollary we obtain the following Sobolev regularity result for
optimal transport maps (of course,
our theorem applies also to the Minkowski problem, yielding $W^{2,1}$ regularity of $\partial \Gamma$):
\begin{corollary}
Let $\O_1,\O_2\subset \R^n$ be two bounded domains, and $f_1,f_2$ two probability densities
such that $0<\l \leq f_1,f_2\leq
\Lambda$ inside $\O_1$ and $\O_2$ respectively.
Let $T=\nabla u:\O_1\to \O_2$ be the (unique) optimal transport map for the quadratic cost
sending $f_1$ onto $f_2$,
and assume that $\O_2$ is convex.
Then $T \in W^{1,1}_{\rm loc}(\O_1)$.
\end{corollary}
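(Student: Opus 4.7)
The plan is to reduce the corollary to Theorem \ref{reg} by a standard localization via sections of the Brenier potential. Given $u:\O_1\to\R$ convex with $T=\nabla u$ the optimal transport map, the change-of-variables in the push-forward condition $(\nabla u)_\# f_1 = f_2$ tells us that $u$ solves
\[
\det D^2 u = \frac{f_1}{f_2\circ \nabla u}
\]
in the Alexandrov sense, with right-hand side pinched between $\l/\L$ and $\L/\l$. Since $\O_2$ is convex and the two densities are pinched between $\l$ and $\L$, Caffarelli's results quoted in the introduction apply and yield that $u$ is strictly convex and of class $C^{1,\a}_{\rm loc}(\O_1)$.

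Next, I would fix $x_0\in \O_1$ and consider the Caffarelli section
\[
S_h(x_0):=\bigl\{x\in\O_1 \,:\, u(x) < u(x_0)+\langle \nabla u(x_0),x-x_0\rangle + h\bigr\}.
\]
By strict convexity and the $C^{1,\a}$ continuity of $\nabla u$, for $h$ small enough $S_h(x_0)$ is a bounded convex set compactly contained in $\O_1$. Set
\[
v(x):=u(x)-u(x_0)-\langle \nabla u(x_0),x-x_0\rangle - h,\qquad x\in \overline{S_h(x_0)}.
\]
Then $v$ is a continuous convex function on $\overline{S_h(x_0)}$ with $v=0$ on $\partial S_h(x_0)$, and its Monge-Amp\`ere measure agrees with that of $u$ (since the two functions differ by an affine map). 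Hence $v$ is an Alexandrov solution of \eqref{MA} on the convex domain $S_h(x_0)$ with right-hand side $\tilde f:=f_1/(f_2\circ \nabla u)$ satisfying $\l/\L \le \tilde f \le \L/\l$.

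Theorem \ref{reg} then applies to $v$ on $S_h(x_0)$ and yields $v\in W^{2,1}_{\rm loc}(S_h(x_0))$; since $D^2 v = D^2 u$, we conclude $u\in W^{2,1}$ in a neighbourhood of $x_0$, and differentiating gives $T=\nabla u\in W^{1,1}$ near $x_0$. A standard covering argument over $\O_1'\subset\subset \O_1$ then yields $T\in W^{1,1}_{\rm loc}(\O_1)$.

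The only genuinely delicate point is the preliminary step in which one invokes Caffarelli's theory to guarantee strict convexity of $u$ (so that the sections $S_h(x_0)$ are indeed compactly contained in $\O_1$) and the fact that $u$ is a bona fide Alexandrov solution of a Monge-Amp\`ere equation with pinched right-hand side; once these facts are in hand, the reduction to the Dirichlet setting of Theorem \ref{reg} via sections, and the covering argument, are routine.
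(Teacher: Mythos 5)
Your proposal is correct and is exactly the standard reduction the paper has in mind (the paper omits the proof of the corollary, treating it as an immediate consequence of Theorem \ref{reg} together with Caffarelli's theory): use convexity of $\O_2$ to guarantee that the Brenier potential is a strictly convex, $C^{1,\alpha}$ Alexandrov solution of $\det D^2u=f_1/(f_2\circ\nabla u)$ with pinched right-hand side, then localize on sections to match the Dirichlet setting of Theorem \ref{reg}. You correctly identify the one genuinely delicate point, namely that convexity of the target is what ensures the push-forward (Brenier) solution is an Alexandrov solution, so nothing is missing.
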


Let us observe that, if in the above corollary one removes the convexity assumption on $\Omega_2$,
by the results in \cite{figkim} we can still deduce that there exists a closed set $\Sigma$ of Lebesgue
measure zero such that $T \in W^{1,1}_{\rm loc}(\O_1\setminus \Sigma)$.
Moreover, combining Theorem \ref{reg} (see also Theorem \ref{main} below) with some recent results of Savin \cite{Savinboundary},
we can apply verbatim his argument in \cite{Savinglobal} to obtain global
$W^{2,1}$ regularity:
more precisely, under our assumptions on $f$,
Theorem 1.2 and Corollary 1.4 in \cite{Savinglobal} hold replacing the $L^p$
norm of $D^2u$ with its $L\log^kL$ norm.\\

We now make some comments on Theorem \ref{reg}.

First of all we remark that, for his $C^{1,\alpha}$ regularity result, Caffarelli did not need
to assume that $\det D^2u$ is bounded from above and below (as in our case),
but only that it defines
a doubling measure (see \cite{CA2} or \cite[Section 3.1]{G} for a precise definition).
However, even for $n=1$ there are doubling measures which are singular with respect to
the Lebesgue measure (see for instance \cite[Chapter 1, Section 8.8(a)]{St} and \cite[Chapter 5, $\S$ 7]{zyg}),
so $u''=\mu$ (the $1$-d version of Monge-Amp\`ere) with $\mu$ doubling cannot imply $W^{2,1}$-regularity.
Moreover, in connection with what was mentioned before,
Wang's counterexamples \cite{wang} show that our result is almost optimal
(still, his examples do not exclude that
 $u$ may be $W^{2,p}$ for some $p=p(n,\l,\L)>1$).\\

The paper is structured as follows: in Section \ref{sec:notation} we introduce
the notation and collect
some preliminary results.
Then in Section \ref{sec:proof} we prove Theorem \ref{reg}. As we will show, Theorem \ref{reg}
is an easy consequence of Theorem \ref{main}, which is the main result of this paper.
\\

\textit{Acknowledgments:} We thank Luigi Ambrosio and Luis Caffarelli
for several useful discussions  about this problem.
We also thank Diego Maldonado for a careful reading of a preliminary version of this paper
and for several useful comments.
The authors acknowledge the hospitality at the Mathematisches Forschungsinstitut
Oberwolfach during the 2011 workshop ``Partial Differential Equations'',
where part of the present work has been done. AF has been partially supported by NSF Grant DMS-0969962.
Both authors acknowledge the support of the ERC ADG Grant GeMeThNES.

\section{Properties of solutions of the Monge Amp\`ere equation and of their sections}
\label{sec:notation}

In this section we recall some basic facts on Alexandrov solutions of the Monge Amp\`ere equation
and the geometric properties of their sections. We refer the reader to \cite{G}
for a detailed exposition on these subjects.\\

Given a Radon measure $\mu$ on $\R^n$, and a bounded convex domain $\Omega \subset \R^n$,
we say that a convex function $u:\Omega \to \R$ is an Alexandrov solution of the Monge-Amp\`ere equation
\[
\begin{cases}
\det D^2 u=\mu \quad &\text{in }\Omega\\
u=0 &\text{on $\partial \Omega$}
\end{cases}
\]
if for any Borel set $B\subset \Omega $ it holds
\[
\Big|\bigcup_{x \in B} \partial u(x)\Big|=\mu(B),
\]
where $\partial u(x)$ denotes the subdifferential of $u$ at $x$.
(Here and in the sequel, $|E|$ denotes the Lebesgue measure of a set $E$.)
As mentioned in the introduction, Caffarelli proved that 
if $\mu$ is a doubling measure inside $\Omega$
(in particular, if $\mu=f\,dx$ with $0<\lambda \le f \le \Lambda$
inside $\Omega$), then $u$ is strictly convex and $u \in C^{1,\alpha}_{\rm loc}(\Omega)$ \cite{CA1,CA2}\footnote{More precisely,
Caffarelli proved the following: (1) if $\ell:\R^n\to \R$ is a supporting linear function for $u$
and $\{u=\ell\}$ is not a point,
then all extremal points of the convex set  $\{u=\ell\}$ are contained in $\partial\Omega$; (2)
if $u$ is strictly convex inside $\Omega$, then $u \in C^{1,\alpha}_{\rm loc}(\Omega)$.
In our situation, since $u=0$ on $\partial\O$, (1) forces the set
$\{u=\ell\}$ to be reduced to a point for any
supporting linear function $\ell$ (since $u\not\equiv 0$),
and so (2) implies that $u \in C^{1,\alpha}_{\rm loc}(\Omega)$.}

A key role in the proof of the previous result is played by the
\emph{sections} of $u$, which play for the Monge-Amp\`ere equation
the same role that balls play for an uniformly elliptic equation.
We recall some important definitions and properties which we will use
in the proof of Theorem \ref{reg}.\\

Given $u:\Omega \to \R$ a convex function, for any point
$x$ in $\Omega$, $p \in \partial u (x)$, and  $t \geq 0$, we define
the section centered at $x$ with height $t$ (with respect to $p$) as
\begin{equation}\label{sec}
S(x,p,t):=\big\{y\in \Omega\,:\ u(y)\le u(x)+p\cdot(y-x)+t\big\}. 
\end{equation}
When $u $ is continuously differentiable 
$\partial u(x)$ reduces to $\{\nabla u(x)\}$, and in this case we will simply write
$S(x,t)$ for $S(x,\nabla u(x), t)$. Moreover, given $\tau>0$,
we will use the notation  $\tau S(x,p,t)$ to denote the dilation of $S(x,p,t)$ by a factor $\tau$ 
with respect to $x$,\footnote{
We remark that one could also consider dilations with respect to the center of mass
of the sections, and the geometric properties described  in Proposition \ref{secprop} are true in both cases.
However, for our estimates, the choice of dilating with respect to $x$
is more convenient.
} that is
\begin{equation}
\label{eq:dilation}
\tau S(x,p,t):=\left\{y \in \R^n \,: \, x+\frac{y-x}{\tau} \in S(x,p,t)\right\}.
\end{equation}
\\

We say that an open bounded convex set $Z \subset \R^n$ is \emph{normalized} if
\[
B(0,1)\subset Z \subset B(0,n).
\] 
By John's Lemma \cite{John48},
for  every open bounded convex set there exists an (invertible) orientation preserving
affine transformation $T:\R^n\to \R^n$ such that $T(Z)$ is normalized. In particular
\begin{equation}\label{det T}
\frac {\omega_n} {|Z|} \le \det T\le \frac {n^{n} \omega_n} {|Z|},\qquad \text{where }\omega_n:=|B(0,1)|. 
\end{equation}

Notice that in the sequel we are not going to notationally distinguish between an
affine transformation and its linear part, since
it will always be clear to what we are referring to.
In particular, we will use the notation
\begin{equation}
\label{eq:norm}
\|T \|:= \sup_{|v| =1} |Av|,\qquad Tx=Ax+b.
\end{equation}
One useful property which we will use is the following identity:
if we denote by $T^*$ the adjoint of $T$, then 
\begin{equation}
\label{eq:norms}
\|T^*T \|=\|T^*\|\|T \|.
\end{equation}
(This can be easily proved using the polar decomposition of matrices.)
\\

Whenever $u$ is a strictly convex solution of \eqref{MA} with $ 0 < \l \leq f \leq \L$
(in particular $u \in C^{1,\alpha}$),
for any $x \in \Omega$
one can choose $t>0$ sufficiently small so that $S(x,t) \subset \subset \Omega$.
Then, if $T$ is the affine transformation which normalizes $S(x,t)$, the function  
\begin{equation}\label{vnor}
v(z):= (\det T)^{2/n}\left[ u(T^{-1}z)-u(x) - \nabla u(x) \cdot (T^{-1}z -x)-t \right]
\end{equation}
solves
\begin{equation}\label{MAbdry}
 \begin{cases}
 \l \leq \det D^2 v \leq \L\quad &\text{in $Z$},\\
 v=0 &\text{on $\partial Z$},
 \end{cases}
\end{equation}
with $Z:=T(S(x,t))$ renormalized. We are going to call $v$ a \emph{normalized solution}.

Whenever $v$ is a normalized solution, it easily follows from 
Alexandrov maximum principle that there exist two constants $c_1,c_2>0$,
depending only on $n,\lambda, \Lambda$, such that
\begin{equation}
\label{eq:Alex}
c_1\le \Bigl|\inf_{Z} v\Bigr|  \le c_2,
\end{equation}
see \cite[Proposition 3.2.3]{G}.
In the sequel we are going to call \emph{universal}
any constant which depends only on $n,\lambda, \Lambda$.\\

As shown in \cite{CA2} and \cite{GH} (see also \cite[Chapter 3]{G}),
sections of solution of \eqref{MA} satisfy
strong geometric properties.
We briefly recall here the ones we are going to use:

\begin{proposition}\label{secprop}
Let $u$ be a strictly convex Alexandrov solution of \eqref{MA} with
$ 0 < \l \leq f \leq \L$. Then, for any $\Omega' \subset \subset \Omega''\subset \Omega$,
there exists a positive constant
$\rho=\rho(n,\lambda, \Lambda,\Omega',\Omega'')$ such that
the following properties hold:
\begin{itemize}
\item[(i)] $S(x,t) \subset \Omega''$ for any $x\in \Omega'$, 
$0 \leq t\le 2\rho$.
\item[(ii)] For all $\tau \in (0,1)$ there exists $\beta=\beta(\tau,n,\l,\L) \in (0,1)$
such that $\tau S(x,t) \subset S(x,\tau t)\subset \beta S(x,t)$ for any $x\in\Omega'$,
$0 \leq t\le 2\rho$.
\item[(iii)] There exists a universal constant $\theta>1$
such that, if $S(x,t)\cap S(y,t) \ne \emptyset$, then $S(y,t) \subset S(x,\theta t)$ 
for any $x,y\in\Omega'$, 
$0 \leq t\le 2\rho/\theta$.
\item[(iv)] $\cap_{0 <t \leq \rho} S(x,t)=\{x\}$.
\end{itemize} 
\end{proposition}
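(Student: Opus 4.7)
The plan is to derive all four properties from Caffarelli's strict convexity theorem combined with Alexandrov's maximum principle applied to the normalized solutions defined in \eqref{vnor}--\eqref{eq:Alex}. Throughout, I write $h_x(y) := u(y) - u(x) - \nabla u(x)\cdot (y-x)$, so that $S(x,t) = \{h_x \leq t\}$ and $h_x$ is convex with $h_x(x) = 0$ and $h_x > 0$ away from $x$.

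Property (iv) is immediate from strict convexity: for every $z \neq x$ one has $h_x(z) > 0$, hence $z \notin S(x,t)$ as soon as $t < h_x(z)$, and intersecting over $t > 0$ leaves only $\{x\}$. For (i), I would argue by compactness: since $u \in C^{1,\alpha}_{\rm loc}(\Omega)$, the map $(x,y) \mapsto h_x(y)$ is continuous on $\overline{\Omega'} \times (\overline{\Omega}\setminus \Omega'')$, and strict convexity forces it to be strictly positive on that set. Compactness then yields a uniform lower bound $h_x(y) \geq 4\rho > 0$ there, so that $S(x,2\rho) \subset \Omega''$ for every $x \in \Omega'$. The first inclusion in (ii) is a direct consequence of the convexity of $h_x$: if $y \in S(x,t)$, then $h_x\big(x + \tau(y-x)\big) \leq (1-\tau) h_x(x) + \tau h_x(y) \leq \tau t$, so $x + \tau(y-x) \in S(x,\tau t)$, which by \eqref{eq:dilation} means $\tau S(x,t) \subset S(x,\tau t)$.

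The second inclusion in (ii) and the engulfing property (iii) are the genuinely delicate points; here the normalization procedure is essential. Fixing $x \in \Omega'$ and $t \leq 2\rho$, let $T$ normalize $S(x,t)$ and let $v$ be the associated normalized solution as in \eqref{vnor}, which solves \eqref{MAbdry} on $Z = T(S(x,t))$ and satisfies the Alexandrov bound \eqref{eq:Alex}. A direct computation shows
\begin{equation*}
T\big(S(x,\tau t)\big) = \{v \leq -(1-\tau)\, t\, (\det T)^{2/n}\},
\end{equation*}
so establishing $S(x,\tau t) \subset \beta S(x,t)$ (dilated about $x$) reduces to showing that the sub-level set $\{v \leq -\delta\}$ lies in a dilation $\beta\, Z$ around $Tx$, for $\delta > 0$ controlled from below universally. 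This follows from a compactness/barrier argument: the family of normalized solutions of \eqref{MAbdry} on normalized domains is precompact (by \cite[Chapter 3]{G}), so $\{v \leq -\delta\}$ is compactly contained in $Z$ with a uniform distance to $\partial Z$, which translates via $B(0,1) \subset Z \subset B(0,n)$ into a dilation bound $\beta = \beta(\tau,n,\lambda,\Lambda) < 1$.

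For (iii) I would proceed similarly: if $S(x,t)\cap S(y,t) \neq \emptyset$, normalize $S(x,\theta t)$ for a sufficiently large universal $\theta$, so that $y$ lies well inside $T\big(S(x,\theta t)\big)$, and use the $C^{1,\alpha}$ gradient estimates on the normalized solution together with the comparability of $\nabla u(y)$ and $\nabla u(x)$ on the overlap to conclude that the entire section $T(S(y,t))$ is contained in $T(S(x,\theta t))$. The main obstacle throughout is precisely quantifying this compactness of the family of normalized solutions, i.e., ensuring that all estimates are universal and do not degenerate as $x$, $t$, and the normalization $T$ vary; once this is in place, each of the four properties follows from convexity of $h_x$ together with the Alexandrov bound \eqref{eq:Alex}.
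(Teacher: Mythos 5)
The paper does not prove Proposition~\ref{secprop}; it is quoted as a known fact with references to \cite{CA2}, \cite{GH} and \cite[Chapter 3]{G}, so there is no ``paper's proof'' against which to compare. Judging your sketch on its own merits: the arguments for (i), (ii) and (iv) are essentially sound. Property~(iv) is indeed immediate from strict convexity of $u$ (equivalently, of $h_x$). Property~(i) follows by the compactness argument you describe, using that $(x,y)\mapsto h_x(y)$ is continuous on $\overline{\Omega'}\times\overline{\Omega}$ (here one needs $u\in C^1(\Omega)$ and $u\in C(\overline\Omega)$, both available) and strictly positive off the diagonal. The first half of (ii) is a one-line convexity computation, correctly carried out. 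For the second half of (ii), your reduction is correct: $T(S(x,\tau t))=\{v\le -(1-\tau)t(\det T)^{2/n}\}$, and $t(\det T)^{2/n}=|\inf_Z v|\in[c_1,c_2]$ by \eqref{eq:Alex}, so $\delta:=(1-\tau)t(\det T)^{2/n}\ge (1-\tau)c_1$ is universal. However, invoking ``precompactness of the family of normalized solutions'' is a heavier tool than needed and not quite what gives a clean $\beta(\tau,n,\lambda,\Lambda)$: the direct route is Alexandrov's maximum principle, which gives $|v(z)|^n\le C(n)\Lambda\,\dist(z,\partial Z)$ for $z\in Z$, hence $\{v\le-\delta\}$ lies at distance $\ge\delta^n/(C(n)\Lambda)$ from $\partial Z$; since the same estimate applied at $Tx$ (where $v$ attains its minimum) keeps $Tx$ uniformly away from $\partial Z$, and $\diam Z\le 2n$, one reads off $\beta<1$ universal by elementary convex geometry.

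The genuine gap is in (iii), the engulfing property. Your sketch --- normalize $S(x,\theta t)$, observe that ``$y$ lies well inside $T(S(x,\theta t))$'', and conclude from $C^{1,\alpha}$ estimates and ``comparability of $\nabla u(y)$ and $\nabla u(x)$'' --- essentially restates the conclusion without a mechanism. Two specific problems. First, the claim that $y$ lies in $S(x,\theta t)$ at all is not a priori available: the hypothesis is only that $S(x,t)\cap S(y,t)\neq\emptyset$, and getting from there to $y\in S(x,\theta t)$ is already a weak form of engulfing, so the argument as stated is circular. Second, what must actually be shown is that the affine function $u(y)+\nabla u(y)\cdot(\cdot-y)+t$ is dominated by $u(x)+\nabla u(x)\cdot(\cdot-x)+\theta t$ on $S(y,t)$; this is a statement about the relative tilt of two supporting planes, and in the Monge--Amp\`ere setting the relevant tilt is governed by the (possibly very anisotropic) shape of the sections, not by the scalar $C^{1,\alpha}$ gradient bound. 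A correct proof (see \cite[Theorem 3.3.7]{G} or \cite{GH}) normalizes $S(x,\theta t)$, uses the quantitative strict convexity of the normalized solution to bound the gradient variation in the \emph{normalized} coordinates, and then transfers that bound to the plane comparison; the aspect-ratio information carried by the normalizing map $T$ is indispensable and is exactly what your sketch omits. As written, the (iii) step would not survive the degenerate geometry that makes the Monge--Amp\`ere engulfing lemma nontrivial.
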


The previous properties play a key
role, since they allow to ``use sections as they were balls''.
In particular the following covering lemma holds, see \cite[Lemma 1]{CAGU}
(we recall that  $\chi_E$ denotes the characteristic function of a set $E$):
\begin{proposition}\label{covering}
Let $u$ be a strictly convex Alexandrov solution of \eqref{MA} with
$ 0 < \l \leq f \leq \L$.
Let $\Omega' \subset \subset \Omega''\subset \Omega$,
and let $A \subset  \Omega'$.
Suppose there exists a family of section $\mathcal F=\{ S(x,t_x)\}_{x \in A}$,
with 
$t_x \le \rho$ for every $x \in A$ (here $\rho$ is as in Proposition \ref{secprop}).
Then there exists a countable subfamily
of $\mathcal G=\{S(x_k, t_{x_k} )\}_{k \in \N}$, with the following properties:
\begin{enumerate}
\item[(i)] $A \subset \bigcup_{k \in \N} S(x_k , t_{x_k} )$,
\item [(ii)] there exist two universal constants $\e_0$ and $K$
such that for every $\e\le \e_0$ it holds
\[
\sum_{k \in \N} \chi_{S(x_k ,(1-\e) t_{x_k} )}(x)\le K|\log \e| \qquad \forall \,x \in \Omega''.
\]  
\end{enumerate}
\end{proposition}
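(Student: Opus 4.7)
The plan is to adapt the classical Vitali covering argument to sections, using Proposition \ref{secprop}(iii) in place of the triangle inequality. Since $t_x \le \rho$ is uniformly bounded on $A$, I would run a greedy selection: set $t^\ast := \sup_{x \in A} t_x$, pick $x_1 \in A$ with $t_{x_1} \ge t^\ast/2$, discard from $A$ every $y$ such that $S(y, t_y) \cap S(x_1, t_{x_1}) \neq \emptyset$, and iterate on the surviving set. This produces the countable subfamily $\{S(x_k, t_{x_k})\}_{k \in \N}$ with $t_{x_k} \to 0$.

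For the covering property (i), every discarded $y$ satisfies $S(y, t_y) \cap S(x_k, t_{x_k}) \neq \emptyset$ for some $k$, with $t_y \le 2 t_{x_k}$ by the ordering of the selection. Enlarging $S(y, t_y)$ to $S(y, 2 t_{x_k})$ by monotonicity of sections in the height parameter and then invoking the engulfing property (iii) at the common height $2 t_{x_k}$, one gets $S(y, t_y) \subset S(x_k, 2\theta\, t_{x_k})$. Running the initial selection with $t_{x_k}$ replaced by $t_{x_k}/(2\theta)$, which is permissible up to shrinking $\rho$, yields (i) exactly as stated.

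The bounded overlap (ii) with the factor $K|\log \e|$ is the heart of the lemma and the main obstacle. Fix $x \in \Omega''$ and let $\mathcal I_x := \{k : x \in S(x_k, (1-\e) t_{x_k})\}$. Since the sections $S(x_k, t_{x_k})$ with $k \in \mathcal I_x$ all contain $x$, they mutually intersect, so by engulfing their heights are universally comparable and they all fit inside a single enlarged section $\widetilde S$ of some volume $V$. A refinement of the Vitali selection moreover makes the cores $S(x_k, c_0 t_{x_k})$ pairwise disjoint for some universal $c_0 > 0$, so the cardinality $|\mathcal I_x|$ is a priori bounded by $V/\inf_k |S(x_k, c_0 t_{x_k})|$, which by volume comparison of comparable sections is already a universal constant; this gives (ii) with an $\e$-independent bound but not yet the logarithmic scaling.

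To capture the dependence on $\e$ I would pass to the normalized picture \eqref{vnor} on $\widetilde S$, where the normalized solution satisfies \eqref{MAbdry} and the Alexandrov bounds \eqref{eq:Alex}. The condition $x \in S(x_k, (1-\e) t_{x_k})$ places (the image of) $x$ inside an $\e$-deep sublevel set of the normalized solution, and the affine distortion is controlled by \eqref{det T} and \eqref{eq:norms}. A quantitative boundary-layer estimate of the form $|S(x_k, t_{x_k}) \setminus S(x_k, (1-\e) t_{x_k})| \le C\e^{\alpha} |S(x_k, t_{x_k})|$ for some universal $\alpha > 0$, combined with a dyadic iteration in $\e$ applied to the disjoint cores inside $\widetilde S$, should refine the trivial overlap bound into $|\mathcal I_x| \le K|\log \e|$. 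Making the boundary-layer measure estimate quantitative with the correct scaling, working through the affine maps and the Alexandrov maximum principle on the normalized solution, is the step I expect to be most delicate; once it is in hand, the iteration and summation follow a standard Calder\'on--Zygmund-type pattern.
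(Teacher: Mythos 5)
The paper does not prove this proposition at all --- it is quoted from \cite[Lemma 1]{CAGU} --- so the comparison here is with that argument. Your selection scheme and part (i) are workable (modulo the bookkeeping of running the Vitali selection on the sections of height $t_x/(2\theta)$), but part (ii) contains a genuine gap, and in fact a false step. You assert that since the sections $S(x_k,t_{x_k})$, $k\in\mathcal I_x$, all contain $x$ and hence mutually intersect, ``by engulfing their heights are universally comparable.'' The engulfing property of Proposition \ref{secprop}(iii) applies only to two sections of the \emph{same} height; two intersecting sections of different heights need not have comparable heights, and nothing in your selection prevents this: a much smaller selected section can sit entirely inside the layer $S(x_k,(1-\e)t_{x_k})\setminus S(x_k,t_{x_k}/(2\theta))$, so disjointness of the cores yields no height comparison. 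Consequently the claimed $\e$-independent universal bound on $|\mathcal I_x|$ is not only unproved but false: Monge--Amp\`ere sections do \emph{not} satisfy a Besicovitch-type finite-overlap property, which is exactly why the proposition is stated for the $(1-\e)$-shrunken sections and with a $|\log\e|$ factor. Even granting height comparability up to a factor $1/\e$, the volume-counting you invoke would give a bound of order $\e^{-n/2}$ (since $|S(x,t)|\sim t^{n/2}$ under $\l\le\det D^2u\le\L$), nowhere near $|\log\e|$.

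The mechanism that actually produces the logarithm is different from the boundary-layer measure estimate $|S(x_k,t_{x_k})\setminus S(x_k,(1-\e)t_{x_k})|\le C\e^{\a}|S(x_k,t_{x_k})|$ you propose at the end. In \cite{CAGU} one selects so that each new center lies \emph{outside} the previously chosen sections, with heights decreasing up to a factor $2$, and then proves a quantitative height-gap lemma: if $x\in S(x_k,(1-\e)t_{x_k})\cap S(x_m,(1-\e)t_{x_m})$ with $m>k$, then engulfing gives $x_m\in S(x_m,t_{x_m})\subset S(x,\theta t_{x_m})$, while $x$ sits at depth $\e t_{x_k}$ inside $S(x_k,t_{x_k})$ so that $S(x,s)\subset S(x_k,t_{x_k})$ for all $s\le c\,\e\, t_{x_k}$; the selection rule $x_m\notin S(x_k,t_{x_k})$ then forces $t_{x_m}\ge c\,\e\, t_{x_k}/\theta$. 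Hence the heights occurring in $\mathcal I_x$ span at most $C|\log\e|$ dyadic generations, and within a single generation the overlap is universally bounded by the volume argument you describe. It is this height-gap estimate, not the measure of the annular layer, that yields $K|\log\e|$; as written, your sketch does not close this step.
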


By Proposition \ref{secprop}
the sections satisfy the list of axioms in \cite[Section 1.1]{St},
so several classical theorems in real analysis hold
using sections in place of Euclidean balls (see \cite[Chapter 1]{St}).
In particular, an important tool we are going to use is the maximal operator
defined through sections.
In order to introduce it, we assume here that 
$u$ is a $C^2$ solution. As we will discuss in the next section,
this can be done without loss of generality as long as all the constants involved in the bounds
are universal.

For any $x \in\Omega' \subset \subset \Omega''\subset \subset \Omega$
and $\rho$ is as in Proposition \ref{secprop}, we define 
\begin{equation}
\label{eq:def max fct}
M_{\Omega',\Omega''}(x):= \sup_{0<t<\rho}\quad \mean{S(x,t)} \|D^2 u(y)\|\,dy,
\end{equation}
where $\|D^2 u(y)\|$ denotes the operator norm of the matrix $D^2u(y)$,
and $\rho$ is as in Proposition \ref{secprop}.
By \cite[Chapter 1, Section 4, Theorem 2]{St} and \cite[Chapter 1, Section 8.14]{St},
the following key property holds:
there exist universal constants $C',C''>0$ such that,
for any $\a\ge \a_0$,
\begin{equation}\label{maximalineq}
 \int\limits_{\{|D^2u\| \ge \a\}\cap \Omega'} \|D^2 u\|
\le C' \a \bigl|{\{|M_{\Omega',\Omega''}(x) \ge C'' \a\}\cap \Omega''}\bigr|.
\end{equation}
Here $\alpha_0 $ is a sufficiently large constant which depends only on $\mean{\Omega'}\|D^2 u\|$ and $\rho$.

\section{Proof of Theorem \ref{reg}}\label{sec:proof}
By standard approximation arguments, it suffices to prove \eqref{ulogu} when
$u \in C^2(\Omega)$.

Let us remark that the proof of \eqref{ulogu} for $k=0$ is elementary: indeed, this follows
from $\|D^2 u\| \leq \Delta u$ (since $u$ is convex) and
a universal interior bound for the gradient of $u$
(see for instance \cite[Lemma 3.2.1 and Proposition 3.2.3]{G} or \eqref{eq:v 2}-\eqref{eq:v 3} below).

Hence, performing an induction on $k$ and using
a standard covering argument (briefly sketched below),
it suffices to prove the following result (recall the notation \eqref{eq:dilation} for the dilation of a section):

\begin{theorem}\label{main}
Let $\U \subset \R^n$ be a normalized convex set,
and $u:\U \to \R$ a $C^2$ convex solution of
\begin{equation}\label{MAnorm}
 \begin{cases}
0<\l \leq  \det D^2 u \leq \L\quad &\text{in $\U$},\\
 u=0 &\text{on $\partial \U$}.\\
 \end{cases}
\end{equation}
Then for any  $k \in \mathbb N \cup \{0\}$
there exists a constant $C=C(k,n,\lambda,\Lambda)$ such that
\[
\int_{ \U/2} \|D^2 u\|\log^{k+1}\big(2+ \|D^2 u\|\big) \le C \int_{3\U/4} \|D^2 u\|\log^{k}\big(2+ \|D^2 u\|\big).
\]
\end{theorem}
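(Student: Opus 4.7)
Set $f := \|D^2 u\|$. The strategy is to use the maximal function inequality \eqref{maximalineq} as a tool to convert the log-weighted integral of $f$ into an estimate on the distribution function of the sectional maximal function $M$, and then to return to an integral of $f$ on the slightly larger set $3\U/4$ via a covering + renormalization argument that provides an appropriate level-set comparison.

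\textbf{Layer cake.} Using the identity $\log^{k+1}(\alpha) = (k+1)\int_0^{\log \alpha} t^k\, dt$ together with Fubini, one rewrites
$$\int_{\U/2} f \log^{k+1}(2+f) \,dx \le C_k \int_{\alpha_0}^\infty \frac{(\log \alpha)^k}{\alpha} \left(\int_{\{f > \alpha\}\cap \U/2} f\, dx\right) d\alpha + C_k,$$
where contributions from $\alpha \le \alpha_0$ are absorbed into the constant.

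\textbf{Applying \eqref{maximalineq}.} With $\Omega' = \U/2$ and $\Omega'' = 3\U/4$, the inner integral is bounded by $C'\alpha\,|\{M > C''\alpha\}\cap 3\U/4|$, yielding
$$\int_{\U/2} f \log^{k+1}(2+f) \le C \int_{\alpha_0}^\infty (\log \alpha)^k |\{M > C''\alpha\}\cap 3\U/4| \, d\alpha + C.$$

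\textbf{Comparison of level sets via sections.} The crucial step is to establish the universal comparison
$$|\{M > \beta\} \cap 3\U/4| \le C \,|\{f > c\beta\} \cap 3\U/4|, \qquad \beta \ge \beta_0.$$
At each $x \in \{M > C''\beta\}\cap 3\U/4$, select a section $S(x,t_x)$ with $\mean{S(x,t_x)} f > \beta$ (this exists by the definition \eqref{eq:def max fct} of $M$). Apply Proposition \ref{covering} (e.g. with $\varepsilon = 1/2$, so the overlap factor $K|\log \varepsilon|$ becomes a universal constant) to extract a subcovering $\{S_j = S(x_j,t_j)\}_j$. Given the sectional density estimate
$$|\{f > c\beta\} \cap S_j| \ge c_0 |S_j|,$$
summing and using the bounded overlap of the shrunken sections (together with Proposition \ref{secprop}(ii), which gives $|S_j^{(1-\varepsilon)}|\ge c_\varepsilon|S_j|$) yields the comparison.

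\textbf{Sectional density via renormalization.} For each $S_j$, let $T_j$ be its normalizing affine map and $v_j$ the corresponding normalized solution from \eqref{vnor}, which solves \eqref{MAbdry} on $Z_j = T_j(S_j)$. The base case $k=0$ (which follows from $\|D^2u\| \le \Delta u$ and interior gradient estimates) supplies the universal bound $\int_{Z_j} \|D^2 v_j\| \le C$, while $\det D^2 v_j \ge \lambda$ combined with the inequality $\lambda_{\max} \ge (\det)^{1/n}$ gives the pointwise lower bound $\|D^2 v_j\| \ge \lambda^{1/n}$. Pulling back to $S_j$ through the change of variables $D^2 u(y) = (\det T_j)^{-2/n}\,T_j^{*} D^2 v_j(T_j y) T_j$ and using \eqref{eq:norms} together with the Alexandrov estimate \eqref{eq:Alex}, one matches these bounds against the hypothesis $\mean{S_j} f > \beta$ to obtain the density lower bound above.

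\textbf{Conclusion.} Plugging the level-set comparison into the estimate from the first step and running Cavalieri's formula in reverse gives
$$\int_{\U/2} f \log^{k+1}(2+f) \le C \int_{\alpha_0}^\infty (\log\alpha)^k |\{f > c\alpha\}\cap 3\U/4|\, d\alpha + C \le C \int_{3\U/4} f \log^{k}(2+f) + C,$$
and the additive constant is absorbed in standard fashion.

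\textbf{Main obstacle.} The technical heart of the argument is the sectional density estimate: on a section where the average of $\|D^2 u\|$ is large, a fixed positive fraction of the section must carry a comparable value of $\|D^2 u\|$. This is not merely an $L^1$ fact about $f$; it requires a careful quantitative use of the renormalization \eqref{vnor} to transfer the universal bounds on the normalized solution $v_j$ into a density statement on $S_j$, controlling the possibly severe anisotropy encoded in $\|T_j\|$ and $\det T_j$ via the geometric properties of Proposition \ref{secprop} and the Alexandrov estimate \eqref{eq:Alex}.
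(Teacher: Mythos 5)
Your overall architecture is the same as the paper's (layer cake, the maximal inequality \eqref{maximalineq}, a level-set comparison between $M$ and $\|D^2u\|$ obtained by covering with sections, and a sectional density estimate), but the one step you yourself identify as the ``technical heart'' --- the sectional density estimate --- is exactly where your sketch contains a genuine gap, and the specific deduction you offer there does not work. You propose to use the pointwise bound $\|D^2 v_j\|\ge \lambda^{1/n}$, i.e.\ a lower bound on the \emph{largest} eigenvalue of $D^2 v_j$. This is the wrong kind of bound for the pullback: from $D^2u(y)=(\det T_j)^{-2/n}T_j^*D^2v_j(T_jy)T_j$ you need $\|T_j^*D^2v_j\,T_j\|\gtrsim\|T_j\|\|T_j^*\|$ on a fixed fraction of the section (since, by the analogue of Lemma \ref{hessmean}, the hypothesis $\mean{S_j}\|D^2u\|>\beta$ only tells you $\beta\lesssim \|T_j\|\|T_j^*\|/(\det T_j)^{2/n}$). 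But a lower bound on $\|D^2v_j\|$ alone does not give this: if $D^2v_j=\mathrm{diag}(1,\epsilon)$ and $T_j=\mathrm{diag}(1,N)$, then $\|T_j^*D^2v_jT_j\|=\max(1,N^2\epsilon)$, which can be arbitrarily small compared with $\|T_j\|\|T_j^*\|=N^2$. What is actually needed is a \emph{full matrix} lower bound $D^2v_j\ge c\,\mathrm{Id}$ on a set of universal relative measure, since only then does \eqref{eq:norms} give $\|T_j^*D^2v_jT_j\|\ge c\|T_j^*T_j\|=c\|T_j\|\|T_j^*\|$. This is precisely the content of the paper's Lemma \ref{hesssupermean}, proved by a convex-envelope/contact-set argument (subtract a paraboloid from $v_j$, take the convex envelope $\Gamma_w$ of $w=v_j-p$, and use the Alexandrov maximum principle to show the contact set $\{\Gamma_w=w\}$, on which $D^2v_j\ge D^2p=c\,\mathrm{Id}$, has universal measure). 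Your sketch contains no substitute for this lemma.

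It is worth noting that the ingredients you list could be reassembled into a correct (slightly different) proof of the density estimate: from the universal bound $\int_{Z_j}\|D^2v_j\|\le C$ and $|Z_j|\ge\omega_n$, Chebyshev gives a universal $M$ with $\|D^2v_j\|\le M$ on at least half of $Z_j$, and on that set the constraint $\det D^2v_j\ge\lambda$ forces the \emph{smallest} eigenvalue to satisfy $\lambda_{\min}(D^2v_j)\ge\lambda/M^{n-1}$, i.e.\ $D^2v_j\ge (\lambda/M^{n-1})\,\mathrm{Id}$ there. That is the matrix lower bound you need, and it is the upper bound on $\|D^2v_j\|$ combined with the determinant constraint --- not the lower bound $\|D^2v_j\|\ge\lambda^{1/n}$ --- that produces it. As written, however, your proposal defers the decisive step and the one concrete mechanism you name for it fails, so the proof is incomplete. (The remaining steps --- the Fubini/Cavalieri manipulations, the use of Proposition \ref{covering} with $\e\le\min\{\e_0,\e_1\}$ rather than an arbitrary $\e=1/2$, and the absorption of the additive constant using that $u$ is normalized --- are essentially the paper's and are fine modulo these minor adjustments.)
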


\medskip
\begin{proof}[Proof of \eqref{ulogu} using Theorem \ref{main}]
We want to show that, if \eqref{ulogu} holds for some $k \in \N \cup \{0\}$,
then it also holds for $k+1$.
Since the following argument is standard, we just give a sketch of the proof.

Given $\Omega' \subset \subset \Omega$, 
fix $\rho$ as in Proposition \ref{secprop} with $\Omega''=\Omega$, and 
consider the covering of $\Omega'$ given by $\{S(x,\rho)\}_{x \in \Omega'}$.
By \cite[Theorem 3.3.8]{G} (see also \cite[Condition A]{CAGU}) all sections
$\{S(x,\rho)\}_{x \in \Omega'}$ have comparable shapes, and such shapes are also comparable to the one of $\Omega$
(since $\Omega$ is also a section for $u$); more precisely, there exist positive constants $r_1,r_2$, depending only on $n,\l,\L,\rho,\Omega$,
such that
\begin{equation}
\label{eq:comp}
B(x,r_1) \subset S(x,\rho)\subset B(x,r_2) \qquad \forall\, x \in \Omega'. 
\end{equation}
This implies that one
can cover $\Omega'$ with finitely
many such sections $\{S_i\}_{i=1,\ldots,N}$ (the number $N$ depending only on $r_1,r_2,\Omega'$),
and moreover the affine transformations $T_i$ normalizing them satisfy the following bounds
(which follow easily from \eqref{eq:comp} and the inclusion $B(0,1)\subset T_i(S_i)\subset B(0,n)$):
$$
\|T_i\| \leq \frac{n}{r_1},\qquad \det T_i \geq \frac{1}{r_2^n}.
$$
Hence, we can define $v_i$ as in \eqref{vnor} with $T=T_i$ and $t=\rho$,
and apply Theorem \ref{main} to each of them: by using the inductive hypothesis we have
$$
\int_{ T_i(S_i)/2} \|D^2 v_i\|\log^{k+1}\big(2+ \|D^2 v_i\|\big) \le C(k,n,\l,\L).
$$
Changing variables back and summing over $i$, we get
$$
\int_{\Omega'} \|D^2 u\|\log^k\big(2+ \|D^2 u\|\big) \leq C(k,n,\l,\L)\sum_{i=1}^N
\frac{\|T_i\|\|T_i^*\|}{(\det T_i)^{1+2/n}}
\log\left(2+\frac{\|T_i\|\|T_i^*\|}{(\det T_i)^{2/n}} \right).
$$
Recalling that we have uniform bounds on $N$ and on $T_i$, this concludes the proof.
\end{proof}

We now focus on the proof of Theorem \ref{main}.
We begin by showing that the average of $\|D^2 u\|$ over a section
is controlled by the size of the ``normalizing affine transformation''.

\begin{lemma}\label{hessmean}
Let $u$ solve \eqref{MAnorm}, fix $x \in \U/2$, and let $t>0$ be such that $S(x,2t) \subset 3\U/4$.
Let $T$ be the affine map which normalizes $S(x,t)$.
Then there exists a positive universal constant $C_1$ such that
\begin{equation}\label{average}
 \frac{\|T\|\|T^*\|}{(\det T)^{2/n}} \geq C_1\mean{S(x,t)} \|D^2 u\|. 
\end{equation}
\end{lemma}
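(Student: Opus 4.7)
The plan is to change variables by an affine map normalizing the \emph{larger} section $S(x,2t)$ (not $S(x,t)$), bound the $L^1$ norm of the Hessian of the resulting normalized solution by a universal constant, and then translate back using the natural change-of-variables factors that produce $\|T\|\|T^*\|/(\det T)^{2/n}$.

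\textbf{Gradient bound on a compactly contained section.} Let $\tilde T$ be the affine map normalizing $S(x,2t)$ and let $\tilde v$ be the corresponding normalized solution on $\tilde Z:=\tilde T(S(x,2t))$ defined as in \eqref{vnor}. Property (ii) of Proposition \ref{secprop} with $\tau=1/2$ yields $S(x,t)\subset\beta S(x,2t)$ for a universal $\beta\in(0,1)$; applying $\tilde T$, this becomes the inclusion of $\tilde T(S(x,t))$ in the $\beta$-dilation of $\tilde Z$ centered at $\tilde Tx$. Combining this with the Aleksandrov maximum principle applied to $\tilde v$ on $\tilde Z$ and with the lower bound $|\tilde v(\tilde Tx)|\ge c_1$ from \eqref{eq:Alex}, one concludes $\dist(\tilde Tx,\partial\tilde Z)\ge d_0$ and hence $\dist(\tilde T(S(x,t)),\partial\tilde Z)\ge(1-\beta)d_0=:\delta_0$ universally. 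Then, since $\tilde v$ is convex with $\tilde v=0$ on $\partial\tilde Z$ and $\tilde v\ge -c_2$ on $\tilde Z$, testing convexity of $\tilde v$ along a ray from $y$ to $\partial\tilde Z$ gives the elementary bound $|\nabla\tilde v(y)|\le c_2/\dist(y,\partial\tilde Z)\le c_2/\delta_0$ uniformly on $\tilde T(S(x,t))$.

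\textbf{Universal bound and change of variables back to $u$.} Using $\|D^2\tilde v\|\le\Delta\tilde v$ (valid for convex $C^2$ functions) and the divergence theorem,
\[
\int_{\tilde T(S(x,t))}\|D^2\tilde v\|\le\int_{\partial\tilde T(S(x,t))}\nabla\tilde v\cdot\nu\,d\mathcal H^{n-1}\le C,
\]
the constant being universal since $\tilde T(S(x,t))\subset B(0,n)$ has universally bounded perimeter. Differentiating \eqref{vnor} twice, together with \eqref{eq:norms}, yields $\|D^2u(y)\|\le(\det\tilde T)^{-2/n}\|\tilde T\|\|\tilde T^*\|\,\|D^2\tilde v(\tilde Ty)\|$; integrating over $S(x,t)$, changing variables $z=\tilde Ty$, and using $|S(x,t)|\det\tilde T=|\tilde T(S(x,t))|\ge 2^{-n}|\tilde Z|\ge 2^{-n}\omega_n$ (from the engulfing $(1/2)S(x,2t)\subset S(x,t)$ and $\tilde Z\supset B(0,1)$) yields
\[
\mean{S(x,t)}\|D^2u\|\le C'\,\frac{\|\tilde T\|\|\tilde T^*\|}{(\det\tilde T)^{2/n}}.
\]

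\textbf{Comparison of $\tilde T$ with $T$, and the main obstacle.} It remains to replace $\tilde T$ by $T$. The same engulfing also gives $\tilde T(S(x,t))\supset B(\tilde Tx/2,1/2)$, so $\tilde T(S(x,t))$ is sandwiched between two balls of universal radii. Since $T(S(x,t))=Z$ is normalized, the affine map $T\circ\tilde T^{-1}\colon\tilde T(S(x,t))\to Z$ and its inverse have universally bounded linear parts (by applying John's Lemma on both ends), yielding $\|\tilde T\|\le C_2\|T\|$ and $\det\tilde T\ge C_3\det T$ for universal constants $C_2,C_3$. Substituting into the previous inequality gives \eqref{average}. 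The main obstacle, and the only step that is not routine affine bookkeeping, is the universal lower bound $\dist(\tilde Tx,\partial\tilde Z)\ge d_0$: this is a consequence of the global Aleksandrov estimate for convex functions with prescribed Monge-Amp\`ere mass, which bounds $|\tilde v(\tilde Tx)|^n$ by $\dist(\tilde Tx,\partial\tilde Z)$ times geometric quantities that are all universally controlled thanks to the normalization of $\tilde Z$; without this step the gradient bound in Step~1 would not be universal on all of $\tilde T(S(x,t))$.
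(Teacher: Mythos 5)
Your proof is correct and follows the same broad strategy as the paper (Alexandrov estimate for the oscillation, convexity for an interior gradient bound, $\|D^2\tilde v\|\le\Delta\tilde v$ plus the divergence theorem, then affine change of variables), but with a genuinely different choice of normalization: you normalize the \emph{larger} section $S(x,2t)$ via $\tilde T$ and must at the end compare $\tilde T$ with $T$ (the map normalizing $S(x,t)$), whereas the paper works directly with $T$ and uses that $T(S(x,2t))$ is \emph{almost} normalized, namely $B(0,1)\subset T(S(x,2t))\subset B(0,3n)$ (their \eqref{eq:normalize 2S}). The paper's choice eliminates your final affine-bookkeeping step entirely: the quantity $\|T\|\|T^*\|/(\det T)^{2/n}$ appears immediately from the change of variables, rather than via $\tilde T$ followed by the comparison $\|\tilde T\|\le C\|T\|$, $\det\tilde T\ge c\det T$. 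Your comparison step is nonetheless valid, though it is a direct argument (an affine bijection between two convex bodies each sandwiched between balls of universal radii has linear part and inverse with universally bounded norm), not literally ``John's Lemma on both ends.'' On the point you flag as the main obstacle: you are right that the gradient bound requires a universal lower bound on $\dist(\tilde Tx,\partial\tilde Z)$, and your argument via the Aleksandrov maximum principle is correct once one observes that $\tilde v$ attains its minimum precisely at $\tilde Tx$ (since $\tilde v$ is, up to the affine change of variables and the vertical shift by $-2t$, the function $u$ minus its supporting plane at $x$), so $|\tilde v(\tilde Tx)|=|\inf_{\tilde Z}\tilde v|\ge c_1$ by \eqref{eq:Alex}; you should state this explicitly. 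The paper handles the same issue more implicitly by invoking Proposition~\ref{secprop}(ii) (engulfing w.r.t.\ $x$, hence w.r.t.\ $Tx$ after applying $T$) together with \cite[Lemma 3.2.1]{G}; the underlying reason the engulfing constant $\beta$ produces a universal distance is the same Alexandrov-type fact you make explicit.
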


\begin{proof}
Let us consider $v: T(S(x,2t)) \to \R$, with $v$ is defined as in \eqref{vnor}.
We notice that 
\begin{equation}
\label{eq:v 1}
D^2 v(z)=(\det T)^{2/n}\left[(T^{-1})^* D^2 u(T^{-1} z)T^{-1}\right], 
\end{equation}
and
\begin{equation}
\label{eq:v bis}
 \begin{cases}
 \l \leq \det D^2 v \leq \L\quad &\text{in $T(S(x,2t))$},\\
 v=t &\text{on $\partial \bigl(T(S(x,2t))\bigr)$}.
 \end{cases}
\end{equation}
Although the convex set $T(S(x,2t))$ is not renormalized in the sense defined before, it
is almost so: indeed, since $T$ normalizes $S(x,t)$ we have $Tz \in B(0,n)$ for any $z \in S(x,t)$.
Recalling that $2S(x,t)$ denotes the dilation of $S(x,t)$ with respect to $x$ (see \eqref{eq:dilation}), we get
$$
T\left(x+\frac{y-x}2\right) \in B(0,n) \qquad \forall\, y \in 2S(x,t),
$$
which is equivalent to
$$
Ty + Tx \in B(0,2n)\qquad \forall\, y \in 2S(x,t).
$$
Since $Tx \in B(0,n)$ this implies that $T(2S(x,t)) \subset B(0,3n)$,
which together with the fact that
$S(x,t) \subset S(x,2t) \subset 2S(x,t)$ (by convexity of $u$)
gives
\begin{equation}
\label{eq:normalize 2S} 
B(0,1) \subset T(S(x,2t))\subset B(0,3n).
\end{equation}
Hence, it follows from \eqref{eq:v bis} and \cite[Proposition 3.2.3]{G} that
\begin{equation}
\label{eq:v 2}
\osc_{T(S(x,2t))} v = \left|\inf_{T(S(x,2t))} (v-t)\right| \le c',
\end{equation}
with $c'$ universal.

Since $v$ is convex, by
\cite[Lemma 3.2.1]{G}, \eqref{eq:normalize 2S},
\eqref{eq:v 2}, and Proposition \ref{secprop}(ii) applied with $\tau=1/2$, we also get
\begin{equation}
\label{eq:v 3}
\sup_{T(S(x,t))} |\nabla v| \leq \sup_{\beta T(S(x,2t))} |\nabla v| \le
\frac{\osc_{T(S(x,2t))} v}{\dist\big(\beta T(S(x,2t)) ,\,\partial \bigl(T(S(x,2t))\bigr)\big)}
\leq
c''
\end{equation}
for some universal constant $c''$.
Moreover, since
$T(S(x,t))$ is a normalized convex set, it holds 
\begin{equation}
\label{eq:v 4}
\omega_n \leq |T(S(x,t))| = \det T |S(x,t)|,
\qquad \mathcal H ^{n-1}\bigl(\partial T(S(x,t))\bigr)\le c(n),
\end{equation}
where $c(n)$ is a dimensional constant (recall that $\omega_n=|B(0,1)|$).
Finally, using again the convexity of $v$, the estimate
\begin{equation}
\label{eq:v 5}
\|D^2v(y)\| \leq \Delta v(y)
\end{equation}
holds (recall that $\|D^2 u(y)\|$ denotes the operator norm of $D^2u(y)$).

Hence,
by \eqref{eq:v 1}, \eqref{eq:v 4}, \eqref{eq:v 5} and \eqref{eq:v 3}, we get
\[
\begin{split}
\mean{S(x,t)} \|D^2 u(y)\|\,dy &= \frac{1}{(\det T)^{2/n}}\mean{S(x,t)} \|T^*D^2 v(Ty)T\|\,dy\\
&\le  \frac{\|T^*\|\|T\|}{(\det T)^{2/n}} \frac{1}{\det T |S(x,t)|}\int_{T(S(x,t))} \|D^2 v(z)\|\,dz\\
&\le \frac{\|T^*\|\|T\|}{(\det T)^{2/n}\omega_n}\int_{T(S(x,t))} \Delta v(z) \,dz\\
&= \frac{\|T^*\|\|T\|}{(\det T)^{2/n}\omega_n}\int_{T(\partial S(x,t))} \nabla v(z)\cdot \nu\, d\mathcal H^{n-1}(z)\\
& \le \frac{c(n)\|T^*\|\|T\|}{(\det T)^{2/n}\omega_n} \sup_{T(S(x,t))} |\nabla v|\\
& \le c'' \frac{c(n)\|T^*\|\|T\|}{(\det T)^{2/n}\omega_n},
\end{split}
\]
which concludes the proof of \eqref{average}.
\end{proof}

We now show that, in every section, we can find a uniform
fraction of points where the norm of the Hessian controls the size of the
``normalizing affine transformation''.
\begin{lemma}\label{hesssupermean}
Let $u$ solve \eqref{MAnorm}, fix $x \in \U/2$, and let $t>0$ be such that $S(x,2t) \subset 3\U/4$.
Let $T$ be the affine map which normalizes $S(x,t)$.
Then there exist universal positive constants $C_2$, $C_3$, $\e_1$, with $\e_1 \in (0,1)$,
and a Borel set $A(x,t)\subset S(x,t)$, such that 
\begin{equation}\label{A}
\frac{|A(x,t)\cap S(x,(1-\e)t)|}{|S(x,t)|}\ge C_2 \qquad \forall\, 0 \leq \e \leq \e_1,
\end{equation}
and 
 \begin{equation}\label{eq: hesssupermean}
 \|D^2 u(y)\| \ge C_3 \frac{\|T\|\|T^*\|}{(\det T)^{2/n}} \qquad \forall\, y \in A(x,t).
\end{equation}
\end{lemma}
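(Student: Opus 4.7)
The plan is to reduce the problem, via the change of variables from \eqref{vnor}, to a one-directional lower bound on $D^2 v$ on a large set, where $v$ is the normalized solution on $T(S(x,t))$. Let $w$ be a unit vector realizing $\|Tw\| = \|T\|$, and set $w^L := Tw/\|T\|$. From \eqref{eq:v 1} and \eqref{eq:norms} one computes
\[
\|D^2 u(y)\| \ge \langle D^2 u(y) w, w\rangle = \frac{\|T\|\|T^*\|}{(\det T)^{2/n}}\,\partial_{w^L w^L}v(Ty),
\]
so it will suffice to exhibit a Borel $\widetilde A \subset T(S(x,t))$ with $|\widetilde A| \ge C_2\,|T(S(x,t))|$, contained in $\{v \le -\e_1 c_2\}$, on which $\partial_{w^L w^L}v \ge c_3$ universally; then $A(x,t) := T^{-1}(\widetilde A)$ satisfies \eqref{eq: hesssupermean} with $C_3 = c_3$. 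The inclusion $\widetilde A \subset \{v \le -\e_1 c_2\}$ is enough because $\nabla v(Tx)=0$ identifies $Tx$ as the minimum of $v$ with $v(Tx) = -(\det T)^{2/n}t$; then \eqref{eq:Alex} forces $(\det T)^{2/n}t \in [c_1,c_2]$, so $\{v \le -\e_1 c_2\} \subset \{v \le -\e t(\det T)^{2/n}\} = T(S(x,(1-\e)t))$ whenever $\e \le \e_1$.

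To control the measure and $L^1$ norm on a sub-level set, fix $\e_1 := c_1/(2c_2)$ and put $K := \{v \le -\e_1 c_2\}$. Convexity of $v$ applied along segments from $Tx$ to $\partial T(S(x,t))$ shows that $K$ contains the $1/2$-dilation $Tx + \tfrac{1}{2}(T(S(x,t))-Tx)$, hence $|K| \ge 2^{-n}|T(S(x,t))|$. Combining $\|D^2 v\| \le \Delta v$ (convexity) with the divergence theorem and the universal gradient bound $|\nabla v| \le c''$ from \eqref{eq:v 3} yields
\[
\int_K \|D^2 v(z)\|\,dz \le \int_{T(S(x,t))}\Delta v\,dz = \int_{\partial T(S(x,t))}\nabla v\cdot \nu\,d\mathcal H^{n-1} \le c''\,c(n) =: C_{**}.
\]

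The key step is to extract a pointwise lower bound on $\partial_{w^L w^L}v$ from the Monge-Amp\`ere inequality $\det D^2 v \ge \l$, via Hadamard's inequality $\det A \le \prod_i \langle Ae_i, e_i\rangle$ for symmetric positive semi-definite $A$ and any orthonormal basis. By Markov, $\widetilde A := \{z \in K : \|D^2 v(z)\| \le 2C_{**}/|K|\}$ satisfies $|\widetilde A| \ge |K|/2 \ge 2^{-(n+1)}|T(S(x,t))|$, and on $\widetilde A$, $\|D^2 v\|$ is bounded by a universal constant $K'$. Completing $w^L$ to an orthonormal basis $(w^L, e_2, \ldots, e_n)$ and applying Hadamard,
\[
\l \le \det D^2 v(z) \le \partial_{w^L w^L} v(z)\,\prod_{i=2}^n \partial_{e_i e_i} v(z) \le \partial_{w^L w^L} v(z)\,(K')^{n-1},
\]
so $\partial_{w^L w^L}v \ge \l/(K')^{n-1} =: c_3$ on $\widetilde A$. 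The main obstacle, and the creative ingredient, is precisely this Hadamard step: second-derivative lower bounds for Monge-Amp\`ere solutions usually require Pogorelov-type estimates that need additional regularity of $f$, but here Hadamard converts an $L^1$ upper bound on $\|D^2 v\|$ into a one-directional pointwise lower bound, and the SVD of $T$ aligns this direction exactly with what the reduction in the first paragraph requires.
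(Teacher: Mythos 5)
Your proposal is correct but follows a genuinely different route from the paper's. The paper's Step one applies the Alexandrov (ABP) maximum principle to $w := v - p$ (with $p$ the fixed paraboloid $c_1(|x|^2/n^2-1)/2$) and its convex envelope $\Gamma_w$: the resulting contact set $E := \{\Gamma_w = w\}$ has universal relative measure, and on it $D^2 v \ge (c_1/n^2)\,\mathrm{Id}$, a full matrix lower bound, which after pull-back and the identity $\|T^*T\|=\|T^*\|\|T\|$ gives \eqref{eq: hesssupermean}; condition \eqref{A} is then checked by estimating $|E\cap(1-\e)Z|/|Z| \ge c' - (1-(1-\e_1)^n)$. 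You avoid convex envelopes and ABP altogether: you integrate $\Delta v$ by the divergence theorem to get a universal $L^1$ bound on $\|D^2 v\|$, use convexity of $v$ along rays from its minimum $Tx$ to exhibit a sub-level set $K=\{v\le -\e_1 c_2\}$ of universal measure, run Markov's inequality to find $\widetilde A\subset K$ on which $\|D^2 v\|\le K'$ universally, and then invoke Hadamard's inequality $\det A\le \prod_i\langle Ae_i,e_i\rangle$ together with $\det D^2 v\ge\l$ to obtain a one-directional lower bound $\partial_{w^Lw^L}v\ge\l/(K')^{n-1}$, where $w^L$ is aligned with the top singular direction of $T$. This is weaker than the paper's conclusion ($D^2 v\ge c''\,\mathrm{Id}$ on $E$ versus a single diagonal entry bounded below on $\widetilde A$), but the SVD reduction at the start shows it is precisely what is needed to control $\|D^2 u\|$. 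A further small streamlining in your approach: by choosing $\e_1 = c_1/(2c_2)$ and using \eqref{eq:Alex} to locate the sub-level set inside $T(S(x,(1-\e)t))$ for all $\e\le\e_1$, you get $A(x,t)\subset S(x,(1-\e)t)$ directly, so \eqref{A} reduces to a single measure estimate rather than an estimate of the loss under dilation. Both arguments produce universal constants; the paper's is the standard ABP-based tool, while yours trades that machinery for the elementary Markov plus Hadamard mechanism.
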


\begin{proof}
We divide the proof in two steps.\\

\noindent {\it Step one: Let $v$ be a normalized solution in $Z$ (see \eqref{MAbdry}).
Then there exist universal constants $c',c''>0$, and a Borel set $E\subset Z$, such that
$|E|\ge c'|Z| $, and $D^2 v(x) \ge c'' {\rm Id}$ for every $x \in E$.}

To see this, let us consider the paraboloid
$p(x):=c_1(|x|^2/n^{2}-1)/2$, with $c_1$ as in \eqref{eq:Alex}
(observe that, since $Z \subset B(0,n)$, $p \leq 0$ inside $Z$). Then 
\[
|\inf_{\Omega} (v-p)|\ge \frac{c_1}{2}. 
\] 
Set $w:=v-p$, and let $\Gamma_w:Z\to \R$ be a convex envelope of $w$ in $Z$, that is
$$
\Gamma_w(y):=\sup\{ \ell(y)\,:\, \ell \leq w \text{ in }Z,\,
\ell \leq 0 \text{ on }\partial Z,\, \ell \text{ affine}\}.
$$
It is well-known that $\Gamma_w$ is $C^{1,1}(Z)$,
and that $\det D^2 \Gamma_w=0$ outside the set $\{\Gamma_w=w\}\subset Z$ (see for instance
\cite[Proposition 6.6.1]{G}). Hence, by Alexandrov Maximum Principle
(see for instance \cite[Lemma 3.2.2]{G}) and from the fact that
$$
0 \leq D^2 \Gamma_w \leq D^2 w \leq D^2 v \qquad \text{a.e. on }\{\Gamma_w=w\}
$$
(in the sense of non-negative symmetric matrices), we get
\[
\begin{split}
\biggl(\frac{c_1}{2}\biggr)^n&\le \bigl|\inf_{Z} w\bigr|^n= \bigl|\inf_{Z} \Gamma_w\bigr|^n
\le C(n) \int_{\{\Gamma_w=w\}} \det D^2 \Gamma_w\\ 
&\leq  C(n) \int_{\{\Gamma_w=w\}} \det D^2 v
\le C(n) \Lambda \big|\{\Gamma_w=w\}\big|.
\end{split}
\]
This provides a universal lower bound on the measure of $E:=\{\Gamma_w=w\}$.

Moreover, since $D^2 w \geq 0$ on $E$, we obtain
$$
D^2 v \geq \frac{c_1}{n^2} {\rm Id} \qquad \text{on }E,
$$
proving the claim.\\

\noindent{\it Step two: Proof of the lemma.}
Let $S(x,t)$ and $T$ be as in the statement of the lemma, and define $v$ as in \eqref{vnor}.
Since $v$ is a normalized solution, we can apply the previous step to find a set
$E \subset Z:=T(S(x,t))$ such that $|E|\ge c'|Z|$, and $D^2 v\ge c'' {\rm Id}$ on $E$.
We define $A(x,t):= T^{-1} (E)$.

To prove \eqref{A} we observe that,
since $S(x,(1-\e)t)\supset (1-\e)S(x,t)$ and $E \subset Z$, for all $\e \leq \e_1$ we have
\[
\begin{split}
\frac{|A(x,t)\cap S(x,(1-\e)t)|}{|S(x,t)|} &
\geq
\frac{|A(x,t)\cap (1-\e)S(x,t)|}{|S(x,t)|}
= \frac{|E\cap (1-\e)Z|}{|Z|}\\
& \geq
\frac{|E|}{|Z|} - \frac{|Z\setminus (1-\e)Z|}{|Z|} \geq c' - \left(1-(1-\e_1)^n\right) \geq \frac{c'}{2},
\end{split}
\]
provided $\e_1$ is sufficiently small.

Moreover, since on $A(x,t)$
\[
D^2 u(y)= \frac{1}{(\det T)^{2/n}} T^* D^2 v(Ty) T\ge \frac{c''}{(\det T)^{2/n}} T^*T, 
\]
using \eqref{eq:norms} we get
\[
\|D^2 u(y)\| \ge \frac{c''\|T^*T\|}{(\det T)^{2/n}} =\frac{c'' \|T^*\|\|T\|}{(\det T)^{2/n}}\qquad \forall\,y \in A(x,t),
\]
which proves \eqref{eq: hesssupermean}.
\end{proof}

Combining the two previous lemmas, we obtain that in every section we can find a uniform
fraction of points where the norm of the Hessian controls its average over the section.
As we will show below, Theorem \ref{main} is a direct consequence of this fact and a covering argument.

To simplify the notation, we use $M(x)$ to denote $M_{\U/2,3\U/4}(x)$
(see \eqref{eq:def max fct}).

\begin{lemma}
Let $u$ solve \eqref{MAnorm}. Then there exists two universal positive
constants $C_4$ and $C_5$ such that
\begin{equation}\label{levelsets}
|\{x\in  \U /2:\ M(x)\ge \gamma\}|\le C_4 |\{x\in 3\U/4\,:\,\ \|D^2 u(x)\|\ge C_5 \gamma\}|
\end{equation}
for every $\gamma>0$.
\end{lemma}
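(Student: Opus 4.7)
The plan is to combine Lemmas \ref{hessmean} and \ref{hesssupermean} with the covering Proposition \ref{covering} in a standard weak-$(1,1)$ argument.

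First, fix $\gamma > 0$ and denote $A_\gamma := \{x \in \mathcal U/2 : M(x) \ge \gamma\}$. For each $x \in A_\gamma$, the definition \eqref{eq:def max fct} of $M$ gives a height $t_x \in (0,\rho)$ such that
\[
\mean{S(x,t_x)} \|D^2 u\| \ge \gamma/2.
\]
Since $x \in \mathcal U/2$ and $t_x < \rho$, Proposition \ref{secprop}(i) (applied with $\Omega' = \mathcal U/2$ and $\Omega''= 3\mathcal U/4$) gives $S(x,2t_x)\subset 3\mathcal U/4$, so Lemmas \ref{hessmean} and \ref{hesssupermean} both apply to $S(x,t_x)$. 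Denoting by $T_x$ the normalizing affine map for $S(x,t_x)$, Lemma \ref{hessmean} yields
\[
\frac{\|T_x\|\|T_x^*\|}{(\det T_x)^{2/n}} \ge C_1 \mean{S(x,t_x)}\|D^2 u\| \ge \frac{C_1\gamma}{2},
\]
and then Lemma \ref{hesssupermean} furnishes a Borel set $A(x,t_x) \subset S(x,t_x)$ with $\|D^2 u(y)\| \ge C_3 C_1 \gamma/2$ on $A(x,t_x)$ and with the additional property \eqref{A}. Set $C_5 := C_1C_3/2$; then
\[
A(x,t_x) \subset \bigl\{y \in 3\mathcal U/4 : \|D^2 u(y)\| \ge C_5 \gamma\bigr\} =: B_\gamma.
\]

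Next, apply the covering Proposition \ref{covering} to the family $\{S(x,t_x)\}_{x \in A_\gamma}$ (note $t_x \le \rho$ is exactly the hypothesis). This extracts a countable subfamily $\{S(x_k,t_{x_k})\}$ covering $A_\gamma$ and such that, fixing once and for all $\e := \min\{\e_0,\e_1\}$ (with $\e_0$ from Proposition \ref{covering} and $\e_1$ from Lemma \ref{hesssupermean}),
\[
\sum_{k} \chi_{S(x_k,(1-\e)t_{x_k})}(y) \le K|\log \e| \qquad \forall\,y \in 3\mathcal U/4.
\]
Now \eqref{A} gives $|A(x_k,t_{x_k}) \cap S(x_k,(1-\e)t_{x_k})| \ge C_2 |S(x_k,t_{x_k})|$, and since each set on the left is contained both in $B_\gamma$ and in $S(x_k,(1-\e)t_{x_k})$, summing and using the controlled overlap yields
\[
C_2 \sum_k |S(x_k,t_{x_k})| \le \sum_k |A(x_k,t_{x_k}) \cap S(x_k,(1-\e)t_{x_k})| \le K|\log \e|\,|B_\gamma|.
\]
Finally, $|A_\gamma| \le \sum_k |S(x_k,t_{x_k})|$ from covering property (i), so \eqref{levelsets} follows with $C_4 := K|\log \e|/C_2$ and $C_5$ as above.

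The only mild subtlety I anticipate is bookkeeping: one must pick a single $\e$ that is admissible for \emph{both} Proposition \ref{covering} (which requires $\e\le \e_0$) and Lemma \ref{hesssupermean} (which requires $\e\le\e_1$), and one must verify that Lemma \ref{hessmean}/\ref{hesssupermean} apply by checking $S(x,2t_x)\subset 3\mathcal U/4$. Both are immediate from the choice of $\rho$ in Proposition \ref{secprop}. Everything else is a direct assembly of the three previous tools, so no serious obstacle is expected.
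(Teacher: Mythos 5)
Your proof is correct and takes essentially the same approach as the paper: definition of $M$ to pick a section where the average of $\|D^2 u\|$ is at least $\gamma/2$, Lemmas \ref{hessmean} and \ref{hesssupermean} combined to produce the set $A(x,t_x)$ of pointwise large Hessian, and Proposition \ref{covering} with $\e = \min\{\e_0,\e_1\}$ together with the density bound \eqref{A} to convert the cover into a measure estimate. Your presentation is just a bit more explicit about verifying the applicability hypothesis $S(x,2t_x)\subset 3\U/4$ via Proposition \ref{secprop}(i) and about naming $B_\gamma$ and $C_5$, but the argument is the paper's.
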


\begin{proof}
By the definition of $M$, we clearly have
\[
\{x\in  \U /2\, :\ M(x)\ge \gamma \} \subset
E:=\Big\{x \in \U/2\, :\ \mean{S(x,t_x)} |D^2 u|\ge \frac{\gamma}{2}
\textrm{ for some $t_x \in (0,\rho)$} \Big\}.
\]
By Proposition \ref{covering}, we can find a
sequence of points $x_k \in E$ such that
$\{S(x_k,t_{x_k})\}_{k\in \N}$ is a countable covering of $E$.
Since $x_k \in E$, by combining Lemmas \ref{hessmean} and \ref{hesssupermean}
we deduce that
\begin{equation}
\label{eq:hess k}
\|D^2 u(y)\| \ge C_1 C_3\mean{S(x_k,t_{x_k})} \|D^2 u\| \geq \frac{C_1 C_3 \gamma}2 \qquad \forall\, y \in A(x_k,t_{x_k}).
\end{equation}
Hence, applying
\eqref{A}, \eqref{eq:hess k}, and Proposition \ref{covering}, and
choosing $\e_2 := \min\{\e_0,\,\e_1\}$, we get
\[
\begin{split}
|\{x\in  \U /2:\ M(x) \ge \gamma \}|&\le \sum_{k\in \N}|S(x_k,t_{x_k})|\\
&\le \frac{1}{C_2}  \sum_{k\in \N}|A(x_k,t_{x_k})\cap S(x_k,(1-\e_2)t_{x_k})|\\
&\le\frac{1}{C_2} \sum_{k\in \N}\big|S(x_k,(1-\e_2)t_{x_k})\cap \{ x \in 3\U /4\,:\
\|D^2 u(x)\|\ge C_1 C_3 \gamma / 2\}|\\
&=\frac{1}{C_2}\int\limits_{3\U/4\cap \{ \|D^2 u(x)\|\ge C_1 C_3 \gamma / 2\}} \sum_{k\in \N}
\chi_{S(x_k,(1-\e_2)t_{x_k})}(y) \,\,dy\\
&\le\frac{K |\log \e_2|}{C_2}  \bigl|\{ x \in 3\U /4\,:\ \|D^2 u(x)\| \ge C_1 C_3 \gamma / 2\}\bigr|,
\end{split}
\]
proving the result.
\end{proof}

\begin{proof}[Proof of Theorem \ref{main}]
Combining \eqref{maximalineq} and \eqref{levelsets}, we obtain the existence of two positive universal constants
$c',c''$ such that
\begin{equation}\label{keyestimate}
\int\limits_{\U /2\cap \{\|D^2 u\|\ge \gamma\}} \|D^2 u\| \le
c'\gamma  |\{x\in 3\U/4:\ \|D^2 u(x)\|\ge c'' \gamma\}| \qquad \forall\,\gamma \geq \bar c,
\end{equation}
with $\bar c$ depending only on $\mean{3\U/4} \|D^2 u\|$ and $\rho$.
Observe that, since $u$ is normalized, both $\mean{3\U/4} \|D^2 u\|$ and $\rho$ are universal
(see the discussion at the beginning of this section), so $\bar c$ is universal as well.

Without loss of generality we can assume that $\bar c \geq 2$, so that
$\log(2+\gamma) \leq 2\log \gamma$ for all $\gamma \geq \bar c$
(this is done just for convenience, to simplify the computations below).
So
\[
\begin{split}
&\int_{\U /2} \|D^2 u\|\log^{k+1}(2+\|D^2u\|)\\
&\le \log(2+ \bar c) \int\limits_{\U /2\cap \{\|D^2 u\|\le \bar c\}} \|D^2 u\|
+\int\limits_{\U /2\cap \{\|D^2 u\|\ge \bar c\}} \|D^2 u\|\log^{k+1}(2+\|D^2 u\|)\\
&\le C(n) \bar c \log \bar c+2\int\limits_{\U /2\cap \{\|D^2 u\|\ge \bar c\}} \|D^2 u\|\log^{k+1}\|D^2 u\|.
\end{split}
\]
Hence, to prove the result, it suffices to control the last term in the right hand side.
We observe that such a term can be rewritten as
$$
2(k+1)\int\limits_{\U /2\cap \{\|D^2 u\|\ge \bar c\}}
\|D^2 u\|\int_{1}^{\|D^2u\|} \frac{\log^k(\gamma)}{\gamma}\,d\gamma,
$$
which is bounded by
$$
C'+ 2(k+1)\int\limits_{\U /2\cap \{\|D^2 u\|\ge \bar c\}}
\|D^2 u\|\int_{\bar c}^{\|D^2u\|} \frac{\log^k(\gamma)}{\gamma}\,d\gamma,
$$
with $C'=C'(k,\bar c)$. Now, by Fubini, the second term is equal to
$$
2(k+1) \int_{\bar c}^\infty
\frac{\log^k(\gamma)}{\gamma}\biggl(\int\limits_{\U /2\cap
\{\|D^2 u\|\ge \gamma\}} \|D^2 u\|\biggr) \,d\gamma,
$$
which by \eqref{keyestimate} is controlled by
$$
2(k+1) c' \int_{\bar c}^\infty
\log^k(\gamma) \,|\{x\in 3\U /4:\ \|D^2 u(x)\|\ge c'' \gamma\}\}| \,d\gamma.
$$
By the layer-cake representation formula, this last term is bounded by
$$
C''\int_{3\U /4} \|D^2 u\|\log^{k}(2+\|D^2u\|)
$$
for some $C''=C''(k,c',c'')$,
concluding the proof.
\end{proof}

\end{document}